\newcommand{\udp}{\raisebox{0.15ex}{\ensuremath{\diagup\kern-0.40em\searrow}}}
\newcommand{\dup}{\ensuremath{\diagdown\kern-0.40em\nearrow}}
\title{Jordan-Hölder with uniqueness for semimodular semilattices}
\author{Pavel Paták}
\author{Pavel Paták\footnote{The research stay of P.P. at IST Austria is funded by the project CZ.02.2.69/0.0/0.0/17\_050/0008466 Improvement of internationalization in the field of research and development at Charles University, through the
support of quality projects MSCA-IF.}}
\affil{Department of Applied Mathematics, Charles University, Prague,  Czech Republic}
\affil{IST Austria, Klosterneuburg, Austria.}
\newtheorem{theorem}{Theorem}
\newtheorem{lemma}[theorem]{Lemma}
\newtheorem{observation}[theorem]{Observation}
\theoremstyle{definition}
\newtheorem{definition}[theorem]{Definition}
\begin{document}

\maketitle

\begin{abstract}
We present a short proof of the Jordan-H\"older theorem with uniqueness for semimodular semilattice:
Given two maximal chains in a semimodular semilattice of finite height, they both have the same length. Moreover there is a unique bijection that takes the prime intervals of the first chain to the prime intervals of the second chain such that the interval and its image are up-and-down projective.

The theorem generalizes the classical result that all composition series of a finite group have the same length and isomorphic factors.
Moreover, it shows that the isomorphism is in some sense unique.
\end{abstract}

\section{Introduction}
The classical Jordan-Hölder theorem~\cite{Jordan1869,Jordan1870,Hoelder1889, Baumslag2006} tells us that any two composition series\footnote{A composition series is a maximal chain $1=G_0\vartriangleleft G_1\vartriangleleft G_2\vartriangleleft\ldots\vartriangleleft G_n=G$ of subgroups of $G$. The quotient groups $G_{i+1}/G_i$, are called its \emph{factors}. Any subgroup that appears in some composition series of $G$ is called \emph{subnormal}.} of a finite group have the same length and, up to a permutation, isomorphic factors. 
It is an essential structural result, which generalizes the fundamental theorem of arithmetics and allows to decompose each finite group into uniquely determined basic building blocks, called simple groups.
Thus in order to classify all finite groups it suffices to classify all simple groups; and the way they can be composed. 

One can easily extend Jordan-Hölder theorem to structures that extend groups, e.g. rings, modules, vector spaces or more generally, groups with operators. Given the importance of the theorem it is natural to ask whether the underlying group structure is needed for the theorem to hold. 

It was already clear to Dedekind that the first part of the statement is not specific to the lattice of subnormal groups. He observed that any two maximal chains in an arbitrary finite (semi)modular lattice\footnote{We note that the subnormal groups form a sublattice of the lattice of all subgroups~\cite{Wielandt1939}.
It is not hard to see that this lattice is dually semimodular, see for example~\cite[p.~302]{Stern1999}.} have the same length~\cite{Dedekind1900}.
However, a lattice theoretic generalization of the whole statement was only proven by Gr\"atzer and Nation in 2010~\cite{Graetzer2010}. To do so, they
introduced the concept of projectivity, a lattice theoretic analogue of the second isomorphism theorem in groups.
They showed that for any two maximal chains in a finite semimodular lattice there is a bijection that takes the prime intervals of one chain to the prime intervals of the second chain such that the interval and its image are up-and-down projective. In that form the range of applications extends from groups to much broader class of structures which for example include matroids\footnote{In matroids we do not have second isomorphism theorem, so up-and-down projectivity plays a smaller role. However, it still tells us something. For example, if $a$ and $b$ are points of a single matroid $M$, then $[0,a]\udp [0,b]$ in the lattice of flats if and only if $a$ and $b$ lie in the same component of $M$~\cite[Proofs of Theorems 393 and 396]{graetzer2011}.} and antimatroids.
The statement has been further generalized to semimodular posets~\cite{Ronse2018}.
In 2011 Cz\'edli and Schmidt~\cite{Czedli2011}
established the strongest form of the theorem for semimodular lattices by showing that for them the permutation is unique (Theorem~\ref{thm:jh}).

In their proof Cz\'edli and Schmidt compare the two chains by looking at the join semilattice generated by them and showing that it is planar. Then they use the theory of planar semimodular semilattices to deduce the result. Eventually, the ideas led to a  developed theory of planar semimodular lattices, which is very valuable by itself~\cite[Chapter~3]{Graetzer2014}.
However, for the proof of the uniqueness in Jordan-H\"older theorem the theory can be bypassed, which shortens the proof significantly.

Here we present a short, distilled proof of Jordan-H\"older theorem together with its uniqueness part,
based on the original inductive approach by Grätzer and Nation.

The paper is organized as follows.
First we recall some basic notions.
Then we introduce the concept of projectivity in a (semi)lattice, show several of its properties and compare it to the second isomorphism theorem in groups. 
After that we present the inductive proof of Theorem~\ref{thm:jh}.

\section{Preliminaries}
We recall the basic notions for reader not familar with (semimodular) lattices.

By a lattice we mean a poset $L$, where every two elements $a,b\in L$ have the least common bound, called \emph{meet} and denoted $a\wedge b$, and the greatest common bound, called \emph{join} and denoted $a\vee b$. The operations $\vee$ and $\wedge$ are commutative, associative, idempotent and satisfy the following absorption laws $(a\wedge b)\vee a =a$;  $(a\vee b)\wedge a =a$.
A poset $S$ is called \emph{join semilattice} if the greatest common bound $a\vee b$ exists for every two elements $a,b\in S$.

We write $a\preceq b$ , iff $a\leq b$ and there is no $c$ with $a<c<b$. If $a\preceq b$ and $a\neq b$, we write $a\prec b$.
An interval $[a,b]$ is called \emph{prime}, if $a\prec b$.
A join semilattice $L$ is called \emph{semimodular} if $a\preceq b$ implies $a\vee c\preceq b\vee c$ for every $c\in L$ and $a,b\in L$.
A \emph{chain} in a poset $P$ is a linearly ordered subset of $P$. 
A poset $P$ is of \emph{finite height}, if all its chains are finite. 

\section{Projectivity}
\begin{definition}
If $L$ is a lattice, we say that an interval $[a,b]$ is up-projective to $[x,y]$, written $[a,b]\nearrow [x,y]$, if and only if $b\wedge x=a$ and $b\vee x=y$. Equivalently we can write $[x,y]\searrow [a,b]$.

If $[a,b]$ and $[c,d]$ are two intervals and there is $[x,y]$ such that 
$[a,b]\nearrow [x,y]\searrow [c,d]$, we say that $[a,b]$ is \emph{up-and-down projective to $[c,d]$} and write $[a,b]\udp[c,d]$,
see Figure~\ref{fig:perspectives}. 
\end{definition}
\begin{figure}[H]
\begin{center}
\begin{minipage}{0.4\textwidth}
\begin{center}
 \begin{tikzcd}[tips=false]
  & y\ar[d]\ar[dl]\ar[dr] & \\
  b\ar[d] & x\ar[dl]\ar[dr]& d\ar[d] \\
  a & & c
 \end{tikzcd}\\[2ex]
 
 $[a,b]\nearrow[x,y]$, $[x,y]\searrow[c,d]$, \\ $[a,b]\udp[c,d]$
 \end{center}
 \end{minipage}\hfil
 \begin{minipage}{0.4\textwidth}
 \begin{center}
 \begin{tikzcd}[tips=false, row sep = 2ex]
  & & f\ar[dl]\ar[d]\\
  & d\ar[dl]\ar[d] & e\ar[dl]\\
  b\ar[d] & c\ar[dl]\\
  a
 \end{tikzcd}\\[2ex]
 
 $[a,b]\nearrow[c,d]\nearrow[e,f]$
 \end{center}
 \end{minipage}
\caption{Projective intervals}
\label{fig:perspectives}
\end{center}
\end{figure}
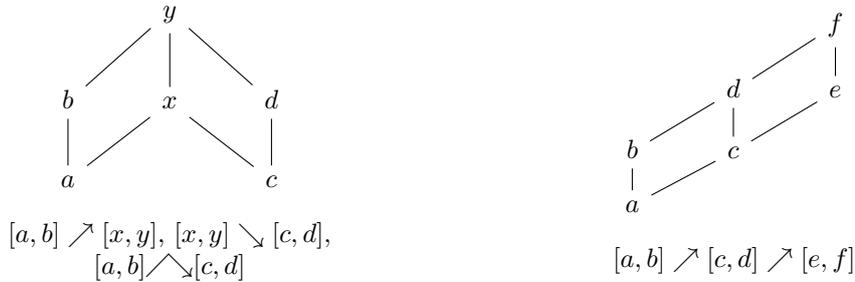

Let us now compare this notion to the second isomorphism theorem for groups.
If $L$ is the lattice of subgroups of some group, $[a,b]\nearrow[x,y]$ and $x$ is normal in $y$, the second isomorphism theorem tells us that $a$ is normal in $b$ and the quotient groups $b/a$ and $y/x$ are isomorphic.

\bigskip

We are going to use the following two  properties of projectivity.
\begin{lemma}\label{lem:primeperspectivity}
  Let $L$ be a lattice, $a,b, x,y \in L$ and $a \prec b$.
  Then $[a,b]\nearrow [x,y]$ if and only if
  $x\neq y$, $a\vee x = x$ and $b\vee x =y$.
 \end{lemma}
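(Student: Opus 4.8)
The plan is to unfold the definition $[a,b]\nearrow[x,y]\iff(b\wedge x=a\text{ and }b\vee x=y)$ and then prove the two implications by elementary order manipulations, letting the covering hypothesis $a\prec b$ carry the one genuinely non-formal step.

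First I would treat the forward direction: assuming $b\wedge x=a$ and $b\vee x=y$, I need the three right-hand conditions. The equality $b\vee x=y$ is handed to me verbatim. For $a\vee x=x$ I would simply note that $a=b\wedge x\leq x$, so the join collapses to $x$. For $x\neq y$ I would argue by contradiction: if $x=y$, then $b\vee x=x$ forces $b\leq x$, hence $b\wedge x=b$; together with $b\wedge x=a$ this gives $a=b$, against $a\prec b$.

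Next I would handle the backward direction, where the real content sits. Assuming $x\neq y$, $a\vee x=x$ and $b\vee x=y$, the defining equality $b\vee x=y$ is again immediate, so everything reduces to showing $b\wedge x=a$. Here I would use primeness: from $a\vee x=x$ I get $a\leq x$, and with $a\leq b$ this sandwiches $a\leq b\wedge x\leq b$. Since $a\prec b$ admits no element strictly between $a$ and $b$, the meet $b\wedge x$ equals $a$ or $b$; and $b\wedge x=b$ would give $b\leq x$, hence $b\vee x=x=y$, contradicting $x\neq y$. Thus $b\wedge x=a$.

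The step I expect to be the crux is precisely this determination of $b\wedge x$ in the backward direction: the left-hand hypotheses say nothing directly about the meet, and it is only the covering relation $a\prec b$ that forces it onto one of the two endpoints. Without primeness the equivalence would fail, so the whole point of the lemma is to trade the meet condition for the join conditions under the assumption that $[a,b]$ is prime.
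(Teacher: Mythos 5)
Your proposal is correct and follows essentially the same route as the paper's own proof: both directions unfold the definition, the forward direction uses $a=b\wedge x\leq x$ to collapse the join and the covering relation to rule out $x=y$, and the backward direction sandwiches $a\leq b\wedge x\leq b$ and lets $a\prec b$ force the meet onto $a$ after excluding $b\wedge x=b$ via $x\neq y$. The only cosmetic difference is that you phrase the forward $x\neq y$ step as a contradiction while the paper argues it contrapositively; the substance is identical.
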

 \begin{proof}
 If $[a,b]\nearrow [x,y]$, then $b\vee x =y$ by definition and $a\vee x=(b\wedge x)\vee x = x$.
 The relation $a\prec b$ implies 
 $a=b\wedge x\neq b$, that is $b\nleq x$, and consequently $y=b\vee x\neq x$.
 
 If $x\neq y$, $a\vee x = x$, $b\vee x =y$, and $a\prec b$, then $a$ is a lower bound of $\{b,x\}$. Therefore, $a\leq b\wedge x\leq b$. By $a\prec b$ there is no $c\in L$ with $a<c<b$, so either $b\wedge x = b$, leading to the forbidden $x=b\vee x=y$, or $b\wedge x=a$, which together with $b\vee x=y$ shows $[a,b]\nearrow [x,y]$.
 \end{proof}

 Observe that for $a\prec b$, Lemma~\ref{lem:primeperspectivity} characterizes projectivity by joins only, hence, in the case $a\prec b$ and $c\prec d$, it allows us to extend the definition of $[a,b]\nearrow[c,d]$ and $[a,b]\udp[c,d]$ to join semilattices for the case $a\prec b$ and $c\prec d$.

\begin{observation}[Transitivity of $\nearrow$]\label{obs:perspTransitivity}
 Let $L$ be a semimodular join semilattice.
 If $a\prec b$, then $[a,b]\nearrow[c,d]$, $[c,d]\nearrow[e,f]$ implies
 $[a,b]\nearrow[e,f]$.
\end{observation}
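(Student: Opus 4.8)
The plan is to work entirely through the join-only characterization of projectivity from Lemma~\ref{lem:primeperspectivity}, since $L$ is only a join semilattice and the meet in the original definition need not exist. Throughout, I will treat $[a,b]\nearrow[c,d]$ (for prime intervals) as the statement $c\neq d$, $a\vee c = c$, $b\vee c = d$, and similarly for the second perspectivity.

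First I would unpack the hypothesis $[a,b]\nearrow[c,d]$. Since $a\prec b$, Lemma~\ref{lem:primeperspectivity} gives $c\neq d$, $a\vee c = c$, and $b\vee c = d$. The key step, and the only place semimodularity enters, is to upgrade this to $c\prec d$: from $a\prec b$ we have $a\preceq b$, so semimodularity yields $a\vee c\preceq b\vee c$, that is $c\preceq d$; combined with $c\neq d$ this gives $c\prec d$. This is the crux of the argument, because it is exactly what licenses applying Lemma~\ref{lem:primeperspectivity} a second time to the interval $[c,d]$, which is not assumed prime a priori.

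With $c\prec d$ established, I would unpack $[c,d]\nearrow[e,f]$ through the same lemma to obtain $e\neq f$, $c\vee e = e$, and $d\vee e = f$. It then remains to verify the three join conditions that the lemma requires for $[a,b]\nearrow[e,f]$. The first, $e\neq f$, is already in hand. The second follows from associativity and the earlier identities, since $a\vee e = (a\vee c)\vee e = c\vee e = e$. The third is analogous: $b\vee e = b\vee(c\vee e) = (b\vee c)\vee e = d\vee e = f$. Applying Lemma~\ref{lem:primeperspectivity} once more (using $a\prec b$) concludes $[a,b]\nearrow[e,f]$.

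I expect the main obstacle to be purely conceptual rather than computational: one must notice that the hypotheses do not directly guarantee $c\prec d$, so that a naive second application of the lemma is unjustified, and that semimodularity is precisely the tool needed to fill this gap. Once the primeness of the middle interval is secured, the remainder is routine manipulation of joins using commutativity, associativity, and idempotence.
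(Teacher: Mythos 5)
Your proof is correct and follows essentially the same route as the paper: use semimodularity together with $c\neq d$ to establish $c\prec d$ (so that Lemma~\ref{lem:primeperspectivity} applies to the middle interval), then verify the three join conditions $e\neq f$, $a\vee e=e$, $b\vee e=f$ by associativity. Your write-up just spells out the join computations that the paper leaves implicit.
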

\begin{proof}
First of all, the semimodularity and $c\neq d$ imply $c\prec d$. Thus we can use the semilattice definition for $[c,d]\nearrow [e,f]$ as  well.
Since $[c,d]\nearrow [e,f]$ we have $e\neq f$ and $e=c\vee e$, which implies $a\vee e=e$ and $b\vee e=f$, as required.
\end{proof}

\section{Jordan-H\"older theorem}
Finally we can state the main theorem.
\begin{theorem}[Jordan-Hölder]\label{thm:jh}
Let $L$ be an upper semimodular join semilattice of finite height.

Let $0=c_0\prec c_1\prec \ldots c_n=1$ and
$0=d_0\prec d_1\prec \ldots d_m=1$ be two maximal chains in $L$.
Then
\begin{enumerate}
 \item $m=n$.
 \item There is a unique permutation $\pi\in S_n$ such that\label{it:uniquePermutation}
 $[c_i,c_{i-1}]\udp [d_{\pi(i)},d_{\pi(i)-1}]$ for all $i=1,2,\ldots, n$.
 \item If $[c_i,c_{i-1}]\udp [d_{j},d_{j-1}]$, then $j\leq \pi(i)$, where $\pi$ is the same as in \ref{it:uniquePermutation}.\label{it:uniqueness}
\end{enumerate}
\end{theorem}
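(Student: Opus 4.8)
The plan is to induct on the height $h$ of $L$, proving assertions 1--3 simultaneously, and to deduce the uniqueness in \ref{it:uniquePermutation} at the very end, purely formally, from the bound in \ref{it:uniqueness} together with the existence of a matching permutation. For $h=0$ everything is vacuous, so I would assume $h\geq 1$, fix the bottom cover $0\prec c_1$, and ``peel off'' the prime interval $[c_0,c_1]$, reducing to the principal filter $F:=[c_1,1]$. Note that $F$ is again a semimodular join semilattice of height at most $h-1$, since it is a join-closed up-set, so covers and joins in $F$ agree with those in $L$; thus the induction hypothesis will apply to $F$.

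First I would locate $c_1$ in the second chain. Let $k$ be the least index with $c_1\leq d_k$; it exists because $c_1\leq d_m=1$ and $c_1\not\leq d_0=0$. Semimodularity applied to $d_{k-1}\prec d_k$ gives $c_1\vee d_{k-1}\preceq d_k$, and since $c_1\not\leq d_{k-1}$ this forces $c_1\vee d_{k-1}=d_k=c_1\vee d_k$. By Lemma~\ref{lem:primeperspectivity} this simultaneously yields $[c_0,c_1]\nearrow[d_{k-1},d_k]$ and tells us that the join-map $x\mapsto c_1\vee x$ collapses exactly the $k$-th step of the $d$-chain. I would set $\pi(1):=k$. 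The first chain restricts to the maximal chain $c_1\prec c_2\prec\cdots\prec c_n$ of $F$, while $c_1\vee d_0\leq\cdots\leq c_1\vee d_m$ becomes, after deleting repetitions, a maximal chain $D'$ of $F$ (each step is a cover or an equality, so consecutive distinct values are covers). Applying part~1 of the induction hypothesis to $F$ shows $D'$ has length $n-1$, so the number of collapsed steps is $m-(n-1)\geq 1$, giving $n\leq m$; the symmetric peel-off of $[d_0,d_1]$ gives $m\leq n$, hence $n=m$ and $k$ is the \emph{only} collapsed step. Consequently, for every $j\neq k$ we have a strict cover and, by Lemma~\ref{lem:primeperspectivity}, $[d_{j-1},d_j]\nearrow[c_1\vee d_{j-1},c_1\vee d_j]$.

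The induction hypothesis applied to the two maximal chains of $F$ now supplies a bijection $\sigma\colon\{2,\dots,n\}\to\{1,\dots,m\}\setminus\{k\}$ matching $[c_{i-1},c_i]$ with the interval $[c_1\vee d_{\sigma(i)-1},c_1\vee d_{\sigma(i)}]$ of $D'$, and I would set $\pi(i):=\sigma(i)$, so $\pi\in S_n$. To convert $F$-projectivity into $L$-projectivity I use transitivity of $\nearrow$ (Observation~\ref{obs:perspTransitivity}): writing $[c_{i-1},c_i]\nearrow[u,v]\searrow[c_1\vee d_{j-1},c_1\vee d_j]$ and prepending $[d_{j-1},d_j]\nearrow[c_1\vee d_{j-1},c_1\vee d_j]$, primeness of $[d_{j-1},d_j]$ yields $[d_{j-1},d_j]\nearrow[u,v]$, whence $[c_{i-1},c_i]\udp[d_{j-1},d_j]$. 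This establishes the matching of \ref{it:uniquePermutation}.

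For the bound \ref{it:uniqueness} I would show $\pi(i)$ is the largest admissible index. When $i=1$, a witness $[u,v]$ of $[c_0,c_1]\udp[d_{j-1},d_j]$ has $c_1\not\leq u$ yet $d_{j-1}\leq u$, so $j>k$ would force $c_1\leq d_k\leq d_{j-1}\leq u$, a contradiction; hence $j\leq k=\pi(1)$. For $i\geq 2$ the index $j=k$ is \emph{impossible}: any witness would satisfy $c_1\leq c_{i-1}\leq u$ and $d_{k-1}\leq u$, so $d_k=c_1\vee d_{k-1}\leq u$ and thus $v=d_k\vee u=u$, contradicting $u\prec v$; while for $j\neq k$ the reverse of the transfer above (the witness $[u,v]$ lies in $F$ because $u\geq c_{i-1}\geq c_1$) gives $[c_{i-1},c_i]\udp[c_1\vee d_{j-1},c_1\vee d_j]$ in $F$, and part~3 in $F$ yields $j\leq\sigma(i)=\pi(i)$. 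Uniqueness then follows at once: any $\pi'$ satisfying \ref{it:uniquePermutation} obeys $\pi'(i)\leq\pi(i)$ for all $i$ by \ref{it:uniqueness}, and two permutations of $\{1,\dots,n\}$ comparable pointwise must coincide. I expect the main obstacle to be the two-way transfer of up-and-down projectivity between $L$ and the filter $F$, together with the bookkeeping around the single collapsed step; by contrast the length equality and the final pointwise-comparison argument are routine.
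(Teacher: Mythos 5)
Your proposal is correct and takes essentially the same route as the paper: induction on height, peeling off $c_1$ and joining it into the $d$-chain (your $k$ is the paper's $l+1$), transferring projectivity to and from the filter $[c_1,1]$ via Lemma~\ref{lem:primeperspectivity} and Observation~\ref{obs:perspTransitivity}, and deducing uniqueness of $\pi$ from the bound in part~3. The only cosmetic difference is in proving $m=n$: you count collapsed steps and invoke symmetry (joining $d_1$ onto the $c$-chain), whereas the paper compares its ``red'' and ``blue'' chains by induction in $[c_1,1]$ and $[d_1,1]$ --- the same two inductive applications in different packaging.
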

\begin{proof}
We proceed by induction on the height of $L$.
The statement is obviously true for height $0$ or $1$.
 
 So let the height of $L$ be higher and
 let $l$ be the largest integer such that
 $c_1\nleq d_l$. Clearly $l<m$.
 We set $d'_j:=c_1\vee d_j$ for all $j=0, \ldots, m$.
 Then $d'_0:=c_1$, $d'_l=d'_{l+1}=d_{l+1}$
 and $d'_j = d_j$ for $j\geq l+1$.
 Furthermore, we define $e_0=d_1$ and $e_i=d'_i$ for $i>0$, see Fig.~\ref{fig:induction}.
 
 \begin{figure}
  \begin{center}
   \includegraphics{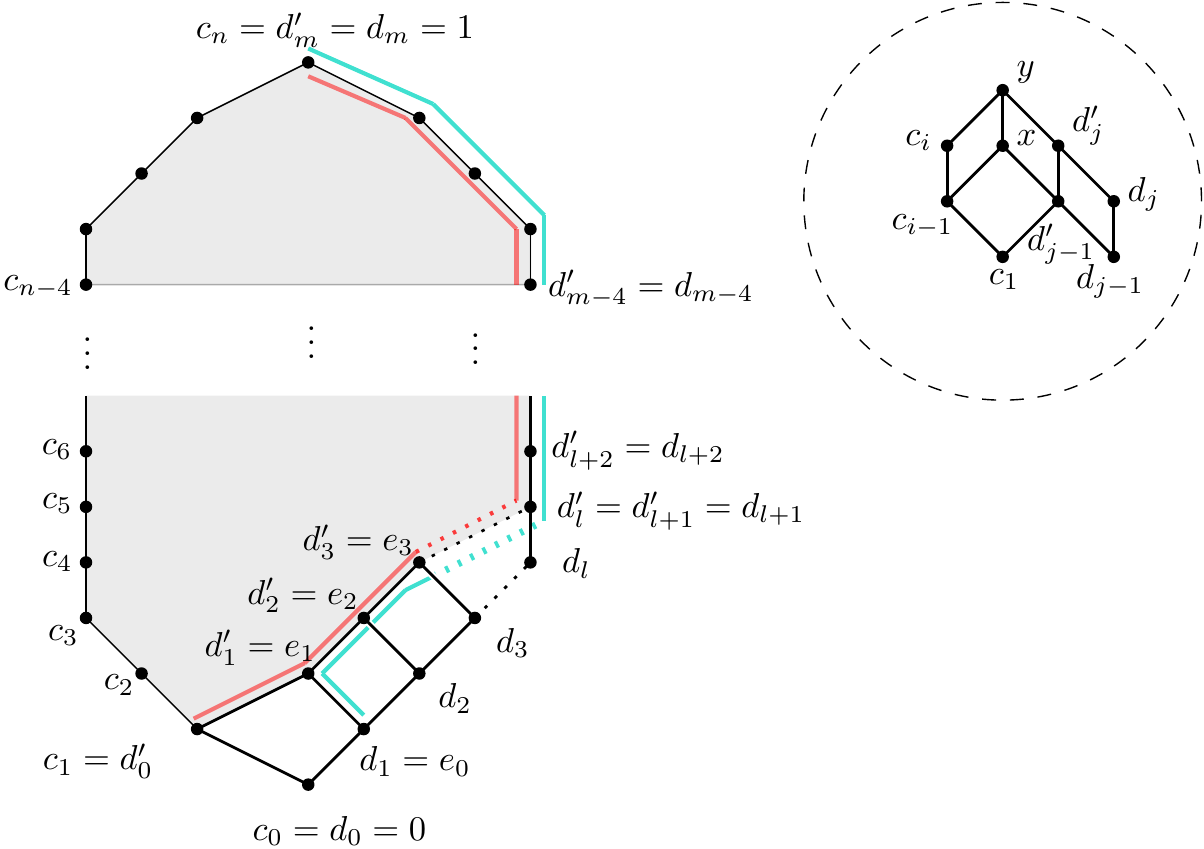}
    \caption{Illustration of the induction steps.}\label{fig:induction}
    \end{center}
 \end{figure}

 The ``red'' chain $c_1=d'_0\preceq d'_1\preceq\ldots\preceq d'_l = d'_{l+1}\preceq\ldots \preceq d'_m=1$ and the ``blue'' chain
 $d_1=e_0\preceq e_1\preceq\ldots\preceq e_l=e_{l+1}\preceq \ldots \preceq e_m=1$
 have obviously the same length and by semimodularity, they are maximal.
 By induction applied in $[c_1,1]$ the length of the red chain equals the length of $c_1\prec c_2\prec\ldots\prec c_n=1$.
 By induction in $[d_1,1]$, the length of the blue chain is the same as the length of $d_1\prec d_2\prec\ldots\prec d_m$. Thus $m=n$, and the first part of the theorem is proven.
 Consequently $d_0'\prec d_1'\prec \ldots d_l'=d_{l+1}'\prec d_{l+2}' \prec\ldots \prec d_m'$.
 
 Let us now find $\pi$. By induction in $[c_1,1]$, there is a unique bijection $\sigma\colon\{2,3,\ldots, n\}\to \{1,2,\ldots, l, l+2, l+3,\ldots, n\}$ such that
 $[c_{i-1},c_i]\udp[d'_{\sigma(i)-1},d'_{\sigma(i)}]$. 
 By construction, $[d'_{\sigma(i)-1},d'_{\sigma(i)}]\searrow[d_{\sigma(i)-1},d_{\sigma(i)}]$ and $[c_0,c_1]\nearrow [d_l,d_{l+1}]$. 
 Therefore, Observation~\ref{obs:perspTransitivity} implies $[c_{i-1},c_i]\udp [d_{\pi(i)-1},d_{\pi
 (i)}]$ for $i=1,\ldots, n$ if we set
 \[
  \pi(i):=\begin{cases}
           \sigma(i) & \text{if $i>1$,}\\
           l+1 & \text{if $i=1$.}
          \end{cases}
 \]

 We now prove that $[c_{i-1},c_i]\udp [d_{j-1},d_j]$ implies $j\leq \pi(i)$. This clearly implies the uniqueness of $\pi$. 
 
 So let $[c_{i-1},c_i]\nearrow [x,y]\searrow  [d_{j-1},d_j]$ for some $x,y\in L$. By Lemma~\ref{lem:primeperspectivity}, $x\neq y$. There are two cases:
 \begin{enumerate}
  \item[$i=1$] Then, $x\neq y=x\vee c_1$ implies $c_1\nleq x$. Thus $d_{j-1}\leq x$ gives $c_1\nleq d_{j-1}$, so $j-1\leq l$ by the definition of $l$.
  Hence $j\leq l+1 = \pi(1)$.
  
  \item[$i>1$] Then $[c_{i-1},c_i]\nearrow [x,y]$ implies $y>x\geq c_{i-1}\geq c_1$,
  see top right part of Figure~\ref{fig:induction}.
  From Lemma~\ref{lem:primeperspectivity}, we immediately obtain $x,y\in [c_1,1]$, $x\neq y$, $x\vee d'_{j-1} = x\vee (c_1\vee d_{j-1}) = (x\vee c_1) \vee d_{j-1} = x\vee d_{j-1} = x$; and $x\vee d'_j = x\vee (c_1\vee d_j) = (x\vee c_1)\vee d_j = x\vee d_j = y$. By Lemma~\ref{lem:primeperspectivity} this implies that in $[c_1,1]$ one has $[c_{i-1},c_i]\udp [d'_{j-1},d'_j]$.
  So by induction hypothesis $j\leq\sigma(i)=\pi(i)$, which finishes the proof.\qedhere
\end{enumerate}
\end{proof}

\bibliographystyle{alpha}
\bibliography{jh}

\begin{thebibliography}{CTS11}

\bibitem[Bau06]{Baumslag2006}
Benjamin Baumslag.
\newblock A simple way of proving the {J}ordan-{H}\"{o}lder-{S}chreier theorem.
\newblock {\em Amer. Math. Monthly}, 113(10):933--935, 2006.

\bibitem[CTS11]{Czedli2011}
G{\'a}bor Cz{\'e}dli and Eligius Tam{\'a}s~Schmidt.
\newblock The {J}ordan-{H}{\"o}lder theorem with uniqueness for groups and
  semimodular lattices.
\newblock {\em Algebra universalis}, 66(1):69, Aug 2011.

\bibitem[Ded00]{Dedekind1900}
Richard Dedekind.
\newblock {\"{U}}ber die von drei {M}oduln erzeugte {D}ualgruppe.
\newblock {\em Math. Ann.}, 53(3):371--403, 1900.

\bibitem[GN10]{Graetzer2010}
George Gr{\"a}tzer and J.~B. Nation.
\newblock A new look at the {J}ordan-{H}{\"o}lder theorem for semimodular
  lattices.
\newblock {\em Algebra universalis}, 64(3):309--311, Dec 2010.

\bibitem[Gr{\"{a}}11]{graetzer2011}
George Gr{\"{a}}tzer.
\newblock {\em Lattice theory: foundation}.
\newblock Birkh\"{a}user/Springer Basel AG, Basel, 2011.

\bibitem[GW14]{Graetzer2014}
George Grätzer and Friedrich Wehrung.
\newblock {\em Lattice Theory: Special Topics and Applications: Volume 1}.
\newblock Birkhäuser, 2014.

\bibitem[H{\"{o}}l89]{Hoelder1889}
Otto H{\"{o}}lder.
\newblock Zur\"{u}ckf\"{u}hrung einer beliebigen algebraischen {G}leichung auf
  eine {K}ette von {G}leichungen.
\newblock {\em Math. Ann.}, 34(1):26--56, 1889.

\bibitem[Jor69]{Jordan1869}
Camille Jordan.
\newblock Th\'eor\`eme sur les \'equations alg\'ebriques.
\newblock {\em C.R. Acad. Sci. Paris}, 68:257--258, 1869.

\bibitem[Jor70]{Jordan1870}
Camille Jordan.
\newblock Trait\'e des substitutions et des \'equations alg\'ebriques.
\newblock {\em Paris}, pages 114--125, 1870.

\bibitem[Ron18]{Ronse2018}
Christian Ronse.
\newblock Semimodularity and the {J}ordan--{H}{\"o}lder theorem in posets, with
  applications to partial partitions.
\newblock {\em Journal of Algebraic Combinatorics}, Oct 2018.

\bibitem[Ste99]{Stern1999}
Manfred Stern.
\newblock {\em Semimodular lattices}, volume~73 of {\em Encyclopedia of
  Mathematics and its Applications}.
\newblock Cambridge University Press, Cambridge, 1999.
\newblock Theory and applications.

\bibitem[Wie39]{Wielandt1939}
Helmut Wielandt.
\newblock Eine {V}erallgemeinerung der invarianten {U}ntergruppen.
\newblock {\em Math. Z.}, 45(1):209--244, 1939.

\end{thebibliography}
\end{document}